\newtheorem{theorem}{Theorem}[section]
\newtheorem{lemma}[theorem]{Lemma}
\newtheorem{corollary}{Corollary}[section]
\theoremstyle{definition}
\theoremstyle{remark}
\numberwithin{equation}{section}
\begin{document}

\title[Entropy production for ES-BGK model]{Entropy production for ellipsoidal BGK model of the Boltzmann equation}


\author{Seok-Bae Yun}
\address{Department of mathematics, Sungkyunkwan University, Suwon 440-746, Republic of Korea }
\email{sbyun01@skku.edu}



\keywords{kinetic theory of gases, BGK model, Ellipsoidal BGK model, Boltzmann equation, Entropy production}

\begin{abstract}
The ellipsoidal BGK model (ES-BGK) is a generalized version of the BGK model of the Boltzmann equation designed to yield the correct Prandtl number in the Navier-Stokes
limit. In this paper, we make two observations on the entropy production functional of the ES-BGK model. First, we show that the Cercignani type estimate holds for the ES-BGK model in the whole range of parameter $-1/2<\nu<1$.
Secondly, we observe that the ellipsoidal relaxation operator satisfies an unexpected sign-definite property.
Some implications of these observations are also discussed.
\end{abstract}

\maketitle
\section{Introduction}
The Boltzmann equation is the fundamental model bridging the particle level description and the hydrodynamic description of gases. The application of the Boltzmann equation, however, has been restrictive due mainly to  the excessive cost involved in the numerical computation of the collision integral. In this regard, Bhatnagar, Gross, Krook \cite{BGK} and,
independently, Walender \cite{Wal} suggested a relaxation type model called the BGK model. Since then, this model has been
replacing the Boltzmann equation in various numerical computations, yielding qualitatively satisfactory results
at much lower computational cost compared to that of the Boltzmann equation. There are, however, also several shortcomings reported, with the most notable one being the non-physical Prandtl number - the ratio between the viscosity and the thermal conductivity - it provides. In search of a model equation with
the correct Prandtl number, Holway \cite{Holway} derived a new equation by generalizing the local Maxwellian in the original BGK relaxation operator into
a non-isotropic Gaussian so that the stress tensor can be treated more sophisticatedly:
\begin{eqnarray}\label{ESBGK}
\begin{split}
&\partial_t f+v\cdot \nabla f=A_{\nu}(\mathcal{M}_{\nu}(f)-f),\cr
&\qquad f(x,v,0)=f_0(x,v),
\end{split}
\end{eqnarray}
which is called the ellipsoidal BGK model (ES-BGK model). $f(x,v,t)$ is the number density on the phase space of position and velocity $(x,v)\in \Omega_x\times \mathbb{R}^3$ at time $t\in \mathbb{R}^+$. Throughout this paper, $\Omega_x$
denotes $\mathbb{R}^3$ or $\mathbb{T}^3$.
The non-isotropic Gaussian $\mathcal{M}_{\nu}$ takes the following form:
\begin{eqnarray*}
\mathcal{M}_{\nu}(f)=\frac{\rho}{\sqrt{\det(2\pi\mathcal{T}_{\nu}})}\exp\left(-\frac{1}{2}(v-U)^{\top}\mathcal{T}^{-1}_{\nu}(v-U)\right),
\end{eqnarray*}
where $\rho$, $U$, $T$ and $\Theta$ are macroscopic density, bulk velocity, temperature and stress tensor $\Theta$ respectively:
\begin{eqnarray*}
\rho(x,t)&=&\int_{\mathbb{R}^3_v}f(x,v,t)dv,\cr
\rho(x,t)U(x,t)&=&\int_{\mathbb{R}^3_v}f(x,v,t)vdv,\cr
3\rho(x,t) T(x,t)&=&\int_{\mathbb{R}^3_v}f(x,v,t)|v-U(x,t)|^2dv,\cr
\rho(x,t)\Theta(x,t)&=&\int_{\mathbb{R}^3_v}f(x,v,t)(v-U)\otimes(v-U)dv,
\end{eqnarray*}
and $\mathcal{T}_{\nu}$ denotes the temperature tensor defined by
\begin{eqnarray*}
\mathcal{T}_{\nu}&=&\left(
\begin{array}{ccc}
(1-\nu) T+\nu\Theta_{11}&\nu\Theta_{12}&\nu\Theta_{13}\cr
\nu\Theta_{21}&(1-\nu)T+\nu\Theta_{22}&\nu\Theta_{23}\cr
\nu\Theta_{31}&\nu\Theta_{32}&(1-\nu) T+\nu\Theta_{33}
\end{array}
\right)\cr
&=&(1-\nu) T Id+\nu\Theta.
\end{eqnarray*}
The relaxation operator satisfies the following cancellation property:
\begin{eqnarray*}
\int_{\mathbb{R}^3_v} \big(\mathcal{M}_{\nu}(f)-f\big)\left(\begin{array}{c}1\cr v\cr|v|^2\end{array}\right)dv=0,
\end{eqnarray*}
leading to the conservations of mass, momentum and energy. H-theorem was not verified when this model was first suggested, and
proved only recently by Andries et al \cite{ALPP} (See also \cite{Brull1,Brull2}):
\[
\frac{d}{dt}\int_{\mathbb{R}^3_v}f\ln fdv\leq 0.
\]
The collision frequency $A_{\nu}$ takes the following form:
\begin{eqnarray}\label{A_nu}
A_{\nu}=\sigma(\rho,T)/(1-\nu),\quad -1/2<\nu< 1,
\end{eqnarray}
 where  $\sigma(x,y)=x^{\alpha}y^{\beta}$ for some $\alpha$, $\beta\geq0$. An application of the Chapmann-Enskog expansion shows that the Prandtl number is given by $Pr=1/(1-\nu)$ (See \cite{ALPP,CC,Holway,Stru-book}). Therefore, we can obtain the correct Prandtl number by adjusting the free parameter $\nu$. Two important cases are $\nu=(Pr-1)/Pr\approx-1/2$ and $\nu=0$.
The former gives the correct Prandtl number, while the latter corresponds to the original BGK model.\newline

The result of this paper is two-fold. First, we establish the following entropy-entropy production estimate:
\begin{eqnarray*}
D_{\nu}(f)\geq C_{\nu}H(f|\mathcal{M}_0),
\end{eqnarray*}
where $\mathcal{M}_0$ denotes the local Maxwellian constructed from $f$, and $C_{\nu}=\min\{1+2\nu,1-\nu\}A_{\nu}$.
Note that the corresponding estimate for the Boltzmann equation takes the following form \cite{TV,V1}:
\[
D_{BE}(f)\geq C_{\varepsilon}H(f|\mathcal{M}_0)^{1+\varepsilon},
\]
where $\varepsilon$ in the exponent, although it can be taken to be arbitrarily small with $C_{\epsilon}$ adjusted accordingly, can never be removed \cite{BC,W}.
Therefore, our result indicates that the ellipsoidal modification of the classical BGK model is still not sophisticate enough to capture the subtle entropy production mechanism of the Boltzmann equation: In terms of entropy production,
the BGK model behaves like the linear Boltzmann equation \cite{BCL}.
The proof is by straightforward computations making combined use of the convexity of $x\ln x$ and the concavity of $\ln x$. The following estimate comparing the entropy difference between various Maxwellians and Gaussians are crucially used:
\begin{eqnarray*}
H\big(\mathcal{M}_0\big)-H\big(\mathcal{M}_{\nu}\big)\geq \max\{\nu,-2\nu\}\big\{H\big(\mathcal{M}_0\big)-H\big(\mathcal{M}_1\big)\big\}.
\end{eqnarray*}

Secondly, we observe that the relaxation operator of the ellipsoidal BGK model has an interesting sign-definite property. We recall that, in the case of the Boltzmann equation or the original BGK model, the entropy dissipation is a
direct consequence of the following elementary inequality:
\begin{eqnarray}\label{elementary}
\mathcal{E}(a,b)\equiv(a-b)(\ln a-\ln b)\geq0.
\end{eqnarray}
The entropy production functional for the Boltzmann equation $D_{BE}(f)$ and the original BGK model $D_{0}(f)$ can be written respectively in the following manner:
\begin{align*}
D_{BE}(f)&=\int_{\mathbb{R}^3_v\times \mathbb{R}^3_{v_*}}B(v,v_*,\omega)\mathcal{E}\big(f^{\prime}f^{\prime}_*, ff_*\big)d\omega dvdv_*,\cr
D_0(f)&=\int_{\mathbb{R}^3_v}A_0\mathcal{E}\big(\mathcal{M}_0(f), f\big)dv,
\end{align*}
where $B$ denotes the Boltzmann collision kernel \cite{C,CIP,V}.
However, we immediately see that this is not the case for the ES-BGK model:
An explicit computation gives rise to an additional term $R_{\nu}(f)$ other than the one characterized by (\ref{elementary}):
\begin{align}\label{add}
D_{\nu}(f)&\equiv-\int_{\mathbb{R}^3_v}A_{\nu}\{\mathcal{M}_{\nu}(f)- f\}\ln fdv\cr
&=\int_{\mathbb{R}^3_v}A_{\nu}\mathcal{E}\big(\mathcal{M}_{\nu}(f), f\big)dv+R_{\nu}(f)
\end{align}
where the remainder term is given by
\begin{eqnarray}\label{R}
R_{\nu}( f)=\Big\{\int_{\mathbb{R}^3_v}A_{\nu}\big\{\mathcal{M}_{\nu}(f)-f\big\}(v-U)\otimes(v-U)dv\Big\}:\mathcal{T}^{-1}_{\nu}.
\end{eqnarray}
This shows that the inequality (\ref{elementary}) alone is, unlike the Boltmzann equation or the original BGK model, not sufficient to
determine the non-negativity of the entropy production functional $D_{\nu}(f)$.
The remainder term arises due to the non-conservative terms inside the ellipsoidal Gaussian, namely the stress tensor.
A rather unexpected property of this remainder $R_{\nu}$ is that it is sign definite with respect to $\nu$
in the following sense:
\[
\left\{\int_{\mathbb{R}^3_v}(\mathcal{M}_{\nu}(f)-f)(v-U)\otimes(v-U)dv\right\}:\mathcal{T}^{-1}_{\nu}\geq (\leq)~ 0\quad \mbox{ if }~\nu\geq (\leq)~0.
\]
This remainder term vanishes when $\nu=0$, which corresponds to the original BGK  model.\newline

A brief referential check is in order.
The first existence result for the BGK model goes back to  \cite{Perthame}, where the Cauchy problem were considered in the framework of
weak solutions. The uniqueness problem in a weighted $L^{\infty}$ setting  was studied in \cite{PP}.
These theories were then generalized to various directions such as the whole space case \cite{Mischler}, $L^p$ setting \cite{ZH}, and the
BGK models with external forces \cite{BoC,WZ} or mean field effect \cite{Zhang}.
Ukai studied the stationary problem in a bounded interval in \cite{Ukai-BGK}. For the results on the existence of classical solutions
and their asymptotic stability near global Maxwellians, see \cite{Bello,Chan,Yun}.
Works on various macrosopic limits such as the diffusion limit and the hydrodynamic limit can be found in
\cite{BV,DMOS,MMM,SR1,SR2}.
Literatures on the numerical computations of BGK type models are abundant, which is natural considering that it was the reason why the BGK model was first suggested.
We do not attempt to present complete list of them. Interested readers may refer to \cite{ABLP,ALPP,AKT,FJ,FR,GT,LPS,M,MS,PPuppo} and references therein.
For rigorous convergence analysis of numerical schemes, see \cite{Issautier,RSY}.

The recent revival of interest on the ES-BGK model can large be attributed to the establishment of the $H$-theorem accomplished in \cite{ALPP}.
Alternate proof was presented later by Brull et al. \cite{Brull1,Brull2}.  The mathematical study of the ES-BGK model such as the existence theory is in its initial stage. The existence of classical solutions was studied when the solution lies close to a global Maxwellian \cite{Yun3}, or when the collision frequency does not depend on macroscopic fields \cite{Yun2}.
For works on the existence of weak solutions, see \cite{PY}. Nice survey on mathematical and physical theory of kinetic equations
can be found in \cite{Bird,C,CIP,CC,GL,Sone,Sone2,Stru-book,UT,V}.
\newline

We define some notations used throughout this paper.
\begin{itemize}
\item If not stated otherwise, constants will be defined generically
\item $C_{a,b,c,..}$ denotes a generic constant which depends, not necessarily exclusively, on $a,b,c...$.
\item For $\kappa\in\mathbb{R}^3_v$, $\kappa^{\top}$ denotes its transpose.
\item Id denotes the $3\times 3$ identity matrix.
\item The Frobenius product $A:B$ denotes
\[
A:B=\sum_{1\leq i,j\leq 3}A_{ij}B_{ij}.
\]
\item We use $T$ to denote the local temperature, while $T^f$ denotes the final time.\newline
\end{itemize}

This paper is organized as follows. In section 2, we prove that the entropy production functional satisfies the Cercignani type inequality.
Exponentially fast convergence to equilibrium in the homogeneous case is derived.
In section 3, the positive definite property of the ellipsoidal relaxation operator is considered. It is shown that this property
can be used to derive the compactness in $L^1$ of the ellipsoidal Gaussian.
\newline\newline
%
%
%
\section{Entropy-Entropy production estimate for ES-BGK model}
We first define the $H$-functional $H(f)$, the relative entropy $H(f|g)$ and the entropy production functional $D_{\nu}(f)$:
\begin{eqnarray*}
H(f)&=&\int_{\mathbb{R}^3_v}f\ln fdv,\cr
H(f|g)&=&\int_{\mathbb{R}^3_v}f\ln \big(f/g\big)dv,\cr
D_{\nu}(f)&=&-\int_{\mathbb{R}^3_v}A_{\nu}\big(\mathcal{M}_{\nu}(f)-f\big)\ln fdv.
\end{eqnarray*}
Recall from (\ref{add}) that, unlike the original BGK model,
\[
D_{\nu}(f)\neq \int_{\mathbb{R}^3_v}A_{\nu}\big(\mathcal{M}_{\nu}(f)-f\big)\big(\ln \mathcal{M}_{\nu}(f)-\ln f\big)dv,
\]
except for the case $\nu=0$.
We also observe that the cases $\nu=0$ and $\nu=1$ correspond respectively to the local Maxwellian and the multivariate Gaussian with the stress tensor $\Theta$ as its covariance:
\begin{eqnarray*}
\mathcal{M}_{0}(f)&=&\frac{\rho}{\sqrt{(2\pi T)^3}}\exp\left(-\frac{|v-U|^2}{2T}\right),\cr
\mathcal{M}_{1}(f)&=&\frac{\rho}{\sqrt{\det(2\pi \Theta)}}\exp\left(-\frac{1}{2}(v-U)^{\top}\Theta^{-1}(v-U)\right).
\end{eqnarray*}
When there's no risk of confusion, we suppress the dependence on $f$ and write $\mathcal{M}_{\nu}$ instead of $\mathcal{M}_{\nu}(f)$ for simplicity.

\begin{theorem}\label{main 1} The entropy production functional $D_{\nu}$ of the ES-BGK model satisfies
\begin{eqnarray*}
D_{\nu}(f)\geq \min\{1+2\nu,1-\nu\}A_{\nu}
H(f|\mathcal{M}_{0})
\end{eqnarray*}
for $-1/2<\nu<1$.
\end{theorem}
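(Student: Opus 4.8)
The plan is to rewrite $D_\nu(f)$ entirely in terms of relative entropies and then isolate the single ``bad'' term, which turns out to be governed only by the geometry of the Gaussians $\mathcal{M}_0$, $\mathcal{M}_\nu$, $\mathcal{M}_1$. First I would record the crucial moment-matching fact: since $\mathrm{tr}\,\mathcal{T}_\nu=(1-\nu)\cdot 3T+\nu\,\mathrm{tr}\,\Theta$ and $\mathrm{tr}\,\Theta=3T$, the Gaussian $\mathcal{M}_\nu$ shares its mass, momentum and energy with $f$ (hence with $\mathcal{M}_0$), and the same holds for $\mathcal{M}_1$. Because $\ln\mathcal{M}_0$ is an affine combination of $1$ and $|v-U|^2$, this gives $\int(f-\mathcal{M}_\nu)\ln\mathcal{M}_0\,dv=0$, so that
\[
D_\nu(f)=A_\nu\int_{\mathbb{R}^3_v}(f-\mathcal{M}_\nu)\ln\frac{f}{\mathcal{M}_0}\,dv.
\]
Splitting $\ln(f/\mathcal{M}_0)=\ln(f/\mathcal{M}_\nu)+\ln(\mathcal{M}_\nu/\mathcal{M}_0)$ and using $\int\mathcal{M}_\nu\ln(\mathcal{M}_\nu/\mathcal{M}_0)\,dv=H(\mathcal{M}_\nu)-H(\mathcal{M}_0)$ (again by moment-matching) I expect to reach the clean identity
\[
D_\nu(f)=A_\nu\Big[H(f|\mathcal{M}_0)+H(\mathcal{M}_\nu|f)-\big(H(\mathcal{M}_\nu)-H(\mathcal{M}_0)\big)\Big].
\]

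Next I would discard the genuinely favourable term. Since $\mathcal{M}_\nu$ and $f$ have equal mass, $H(\mathcal{M}_\nu|f)\geq 0$ by the Gibbs inequality (convexity of $x\ln x$ through Jensen), leaving $D_\nu(f)\geq A_\nu[H(f|\mathcal{M}_0)-H(\mathcal{M}_\nu|\mathcal{M}_0)]$, where I write $H(\mathcal{M}_\nu|\mathcal{M}_0)=H(\mathcal{M}_\nu)-H(\mathcal{M}_0)\geq 0$. This surviving term works against the estimate and must be controlled, which I would do by two comparisons. On one hand, the entropy gap between $f$ and the full-stress Gaussian is nonnegative, $H(f|\mathcal{M}_1)=H(f)-H(\mathcal{M}_1)\geq 0$; combined with $H(f|\mathcal{M}_0)=H(f)-H(\mathcal{M}_0)$ and $H(\mathcal{M}_1|\mathcal{M}_0)=H(\mathcal{M}_1)-H(\mathcal{M}_0)$ this gives $H(\mathcal{M}_1|\mathcal{M}_0)\leq H(f|\mathcal{M}_0)$. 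On the other hand, I invoke the Gaussian comparison quoted in the introduction, $H(\mathcal{M}_0)-H(\mathcal{M}_\nu)\geq\max\{\nu,-2\nu\}(H(\mathcal{M}_0)-H(\mathcal{M}_1))$, equivalently $H(\mathcal{M}_\nu|\mathcal{M}_0)\leq\max\{\nu,-2\nu\}\,H(\mathcal{M}_1|\mathcal{M}_0)$. Chaining the two yields $H(\mathcal{M}_\nu|\mathcal{M}_0)\leq\max\{\nu,-2\nu\}H(f|\mathcal{M}_0)$, whence $D_\nu(f)\geq A_\nu(1-\max\{\nu,-2\nu\})H(f|\mathcal{M}_0)=\min\{1+2\nu,1-\nu\}A_\nu H(f|\mathcal{M}_0)$, as claimed.

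The one input I would not take for granted is that Gaussian comparison, and this is where the real work sits; I would prove it by reducing to the eigenvalues $\lambda_1,\lambda_2,\lambda_3$ of $\Theta$. Setting $\mu_i=\lambda_i/T$ one has $\sum_i\mu_i=3$ (this is exactly $\mathrm{tr}\,\Theta=3T$), and from $H(\mathcal{M}_\nu)=\mathrm{const}(\rho)-\tfrac{\rho}{2}\ln\det\mathcal{T}_\nu$ one gets $H(\mathcal{M}_0)-H(\mathcal{M}_\nu)=\tfrac{\rho}{2}\Phi(\nu)$ with $\Phi(\nu)=\sum_i\ln(1+\nu(\mu_i-1))$ and $\Phi(1)=\sum_i\ln\mu_i$. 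Since $\Phi''(\nu)=-\sum_i(\mu_i-1)^2/(1+\nu(\mu_i-1))^2\leq 0$, the function $\Phi$ is concave with $\Phi(0)=0$, and the range $-1/2<\nu<1$ is precisely where each factor $1+\nu(\mu_i-1)$ stays positive. For $0\leq\nu<1$ the chord from $0$ to $1$ gives $\Phi(\nu)\geq\nu\,\Phi(1)$ at once.

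The delicate case is $-1/2<\nu<0$, where the naive termwise bound fails and one must genuinely exploit the constraint $\sum_i\mu_i=3$. Here I would use the chord from $0$ to $-1/2$, giving $\Phi(\nu)\geq(-2\nu)\Phi(-1/2)$, together with the arithmetic--geometric mean observation $\Phi(-1/2)=\sum_i\ln\tfrac{\mu_j+\mu_k}{2}\geq\sum_i\ln\sqrt{\mu_j\mu_k}=\Phi(1)$, using $3-\mu_i=\mu_j+\mu_k$; since $-2\nu>0$ this yields $\Phi(\nu)\geq-2\nu\,\Phi(1)$. I anticipate this last step to be the main obstacle: everything else is bookkeeping with relative entropies and the moment identities, but turning the failed termwise estimate for $\nu<0$ into a valid one through the trace constraint is the crux of the whole argument.
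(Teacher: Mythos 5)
Your proposal is correct and follows essentially the same route as the paper: your exact identity followed by dropping $H(\mathcal{M}_\nu|f)\geq 0$ is precisely the paper's convexity step $D_\nu(f)\geq A_\nu\{H(f)-H(\mathcal{M}_\nu)\}$, and the two controls you chain together are exactly the paper's Lemma \ref{main lemma} (the Gaussian comparison) and Lemma \ref{main lemma 2} ($H(\mathcal{M}_1)\leq H(f)$). Your proof of the comparison lemma via concavity of $\Phi(\nu)=\sum_i\ln(1+\nu(\mu_i-1))$ and AM--GM at the endpoint $\nu=-1/2$ reproduces, in slightly different packaging, the paper's argument of writing $(1-\nu)T+\nu\theta_i$ as a convex combination (of $T,\theta_i$ for $\nu\geq 0$, and of $T$ and the other eigenvalues via the trace constraint for $\nu<0$) and applying concavity of $\ln$.
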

We start with following lemma, which is crucially used in the proof of the theorem.

%
%
%
%
\begin{lemma}\label{main lemma}For $-1/2<\nu<1$
\begin{eqnarray*}
H\big(\mathcal{M}_0\big)-H\big(\mathcal{M}_{\nu}\big)\geq \max\{\nu,-2\nu\}\big\{H\big(\mathcal{M}_0\big)-H\big(\mathcal{M}_1\big)\big\}.
\end{eqnarray*}
\end{lemma}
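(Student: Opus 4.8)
The plan is to reduce the statement to an elementary inequality for the eigenvalues of the stress tensor. Since $\mathcal{M}_0$, $\mathcal{M}_\nu$ and $\mathcal{M}_1$ are all Gaussians sharing the same mass $\rho$ and mean $U$, a direct computation of $H(g)=\int g\ln g\,dv$ for a Gaussian with covariance $\Sigma$ gives $H(g)=\rho\ln\rho-\tfrac{\rho}{2}\ln\det(2\pi\Sigma)-\tfrac{3\rho}{2}$, so that only the determinant of the covariance survives in a difference of entropies. Recalling $\mathcal{T}_\nu=(1-\nu)T\,\mathrm{Id}+\nu\Theta$ and $\mathcal{T}_0=T\,\mathrm{Id}$, I would diagonalize the symmetric positive definite matrix $\Theta$ and write its eigenvalues as $Tx_1,Tx_2,Tx_3$. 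Since $\mathrm{tr}\,\Theta=3T$ we have the constraint $x_1+x_2+x_3=3$ with each $x_i>0$, and the eigenvalues of $\mathcal{T}_\nu$ are $T(1+\nu(x_i-1))$. This turns both entropy differences into
\[
H(\mathcal{M}_0)-H(\mathcal{M}_\nu)=\tfrac{\rho}{2}F(\nu),\qquad H(\mathcal{M}_0)-H(\mathcal{M}_1)=\tfrac{\rho}{2}F(1),
\]
where $F(\nu):=\sum_{i=1}^3\ln\bigl(1+\nu(x_i-1)\bigr)$, so the lemma becomes the scalar inequality $F(\nu)\ge\max\{\nu,-2\nu\}F(1)$ for $-1/2<\nu<1$.

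Next I would record the structural properties of $F$. The constraint $\sum x_i=3$ forces each $x_i<3$, which is exactly what guarantees $1+\nu(x_i-1)>1+2\nu>0$ on the whole range $\nu>-1/2$; hence $F$ is smooth there, in fact on an open interval containing $[-1/2,1]$. One computes $F(0)=0$, $F'(0)=\sum(x_i-1)=0$, and $F''(\nu)=-\sum\bigl((x_i-1)/(1+\nu(x_i-1))\bigr)^2\le0$, so $F$ is concave with a maximum at $\nu=0$. Writing $S:=F(1)=\sum\ln x_i$, the arithmetic--geometric mean inequality together with $\sum x_i=3$ gives $\prod x_i\le1$, i.e. $S\le0$; both sides of the target inequality are therefore nonpositive.

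With these in hand the two cases follow from concavity. For $0\le\nu<1$, the value $F(\nu)=F((1-\nu)\cdot0+\nu\cdot1)$ lies above the chord joining $(0,0)$ and $(1,S)$, giving $F(\nu)\ge\nu S=\max\{\nu,-2\nu\}S$. For $-1/2<\nu\le0$, I would instead use the chord joining the endpoints $-1/2$ and $0$: concavity yields $F(\nu)\ge-2\nu F(-1/2)$, and since $-2\nu\ge0$ it then suffices to prove the single comparison $F(-1/2)\ge F(1)=S$. Because $1-\tfrac12(x_i-1)=\tfrac12(3-x_i)=\tfrac12(x_j+x_k)$, this last inequality is equivalent to $(x_1+x_2)(x_2+x_3)(x_3+x_1)\ge8x_1x_2x_3$, which is immediate from $x_j+x_k\ge2\sqrt{x_jx_k}$.

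The step I expect to be genuinely delicate is the negative-$\nu$ case. The naive approach---fixing $\nu$ and trying to show $\ln(1+\nu(x-1))+2\nu\ln x$ is convex in $x$ so as to apply Jensen under $\sum x_i=3$---fails, since that function loses convexity as $x\uparrow3$; the inequality only survives because it becomes slack near the boundary. Organizing the argument as concavity in $\nu$ (reducing everything to the endpoint value $F(-1/2)$) and then invoking the symmetric AM--GM inequality is what makes the negative range, and in particular the sharp threshold $\nu=-1/2$, work out cleanly.
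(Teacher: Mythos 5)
Your proof is correct, and the positive-$\nu$ half follows the paper exactly; the two arguments differ only in how the negative range is handled. The reduction to the scalar inequality $F(\nu)\ge\max\{\nu,-2\nu\}F(1)$ with $F(\nu)=\sum_i\ln\big(1+\nu(x_i-1)\big)$ and $\sum_i x_i=3$ is precisely the paper's computation (explicit Gaussian entropies, $\mathcal{T}_{\nu}:\mathcal{T}_{\nu}^{-1}=3$, diagonalization of $\Theta$, trace constraint $\theta_1+\theta_2+\theta_3=3T$), and for $0\le\nu<1$ your chord bound $F(\nu)\ge\nu F(1)$ is the same as the paper's termwise concavity of $\ln$ applied to the convex combination $(1-\nu)T+\nu\theta_i$. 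For $-1/2<\nu<0$ the paper rewrites $(1-\nu)T+\nu\theta_i=(1+2\nu)T-\nu\sum_{j\ne i}\theta_j$, which is a genuine convex combination of $T,\theta_j,\theta_k$ precisely because $\nu>-1/2$, and applies Jensen once at each $\nu$ to get $F(\nu)\ge-2\nu F(1)$ in one step; you instead use concavity of $F$ in the parameter $\nu$ to interpolate to the endpoint, $F(\nu)\ge-2\nu F(-1/2)$, and then prove the single comparison $F(-1/2)\ge F(1)$, i.e.\ $(x_1+x_2)(x_2+x_3)(x_3+x_1)\ge 8x_1x_2x_3$, by AM--GM. These are two organizations of the same underlying inequality: your endpoint estimate $\ln\frac{x_j+x_k}{2}\ge\frac{1}{2}(\ln x_j+\ln x_k)$ is exactly the paper's three-point Jensen evaluated at $\nu=-1/2$, with your concavity-in-$\nu$ step replacing the paper's $\nu$-dependent weights. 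What your version buys is a clean isolation of why $\nu=-1/2$ is the sharp threshold (everything reduces to one symmetric endpoint inequality) and the global picture of $F$ (concave, maximized at $\nu=0$ with $F(0)=F'(0)=0$, both sides of the lemma nonpositive); the paper's version is shorter. One hypothesis both treatments leave implicit is that $\Theta$ must be positive definite (all $x_i>0$, hence all $x_i<3$) for $F(1)$ and $F(-1/2)$ to be finite --- but this is already required for $\mathcal{M}_1$ to be defined, so it is not a gap peculiar to your argument.
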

\begin{proof}
An explicit computation gives
\begin{eqnarray*}
H(\mathcal{M}_0)&=&\int_{\mathbb{R}^3_v}\mathcal{M}_0\ln \mathcal{M}_0dv\cr
&=&\int_{\mathbb{R}^3_v}\mathcal{M}_0\Big\{\ln\frac{\rho}{(2\pi T)^{3/2}}-\frac{|v-U|^2}{2T}\Big\}dv\cr
&=&\rho\ln\frac{\rho}{(2\pi T)^{3/2}}-\frac{3}{2}\rho,
\end{eqnarray*}
and
\begin{eqnarray*}
H(\mathcal{M}_{\nu})&=&\int_{\mathbb{R}^3_v}\mathcal{M}_{\nu}\ln \mathcal{M}_{\nu} dv\cr
&=&\int_{\mathbb{R}^3_v}\mathcal{M}_{\nu}\Big\{\ln\frac{\rho}{\{\det(2\pi \mathcal{T}_{\nu})\}^{1/2}}-\frac{1}{2}(v-U)^{\top}\mathcal{T}_{\nu}^{-1}(v-U)\Big\}dv\cr
&=&\rho\ln\frac{\rho}{\{\det(2\pi \mathcal{T}_{\nu})\}^{1/2}}-\frac{1}{2}\rho\mathcal{T}_{\nu}:\mathcal{T}_{\nu}^{-1}.
\end{eqnarray*}
We then recall the identity
\begin{eqnarray*}
A:B=tr(A^{\top}B)
\end{eqnarray*}
and use the the symmetry of $\mathcal{T}_{\nu}$ to compute
\begin{eqnarray*}
\mathcal{T}_{\nu}:\mathcal{T}_{\nu}^{-1}=tr\big(\mathcal{T}^{\top}_{\nu}\mathcal{T}_{\nu}^{-1}\big)
=tr\big(\mathcal{T}_{\nu}\mathcal{T}_{\nu}^{-1}\big)=tr\big(Id\big)=3,
\end{eqnarray*}
so that
\begin{eqnarray*}
H(\mathcal{M}_{\nu}(f))=\rho\ln\frac{\rho}{\{\det(2\pi \mathcal{T}_{\nu})\}^{1/2}}-\frac{3}{2}\rho.
\end{eqnarray*}
Therefore,
\begin{eqnarray}\label{rewritten}
H(\mathcal{M}_0)-H(\mathcal{M}_{\nu}(f))=\frac{1}{2}\rho\ln\frac{\det\mathcal{T}_{\nu}}{T^3}.
\end{eqnarray}
We note that, due to the symmetry of $\Theta$, there exists an orthogonal matrix $P$ such that $P^{\top}\Theta P$ is a diagonal matrix.
We denote the diagonal elements as $\theta_i~(i=1,2,3)$, which are non-negative owing to
\[
\kappa^{\top}\Theta\kappa=\frac{1}{\rho}\int_{\mathbb{R}^3_v}f\{(v-U)\cdot\kappa\}^2dv\geq 0,\quad \kappa\in \mathbb{R}^3_v,
\]
to write
\begin{eqnarray*}
P^{\top}\Theta P=\left(\begin{array}{ccc}\theta_1&0&0\cr0&\theta_2&0\cr0&0&\theta_3\end{array}\right).
\end{eqnarray*}
This gives
\begin{eqnarray*}
P^{\top}\mathcal{T}_{\nu}P&=&P^{\top}\{(1-\nu)TId+\nu\Theta\}P\cr
&=&(1-\nu)TId+\nu P^{\top}\Theta P\cr
&=&\left(\begin{array}{ccc}(1-\nu)T+\nu\theta_1&0&0\\0&(1-\nu)T+\nu\theta_2&0\\0&0&(1-\nu)T+\nu\theta_3\end{array}\right).
\end{eqnarray*}
Since the determinant is invariant under similarity transform,
\begin{eqnarray*}
\det\mathcal{T}_{\nu}=\prod_{1\leq i\leq 3}\big\{(1-\nu)T+\nu\theta_i\big\},
\end{eqnarray*}
and (\ref{rewritten}) can be rewritten as
\begin{eqnarray}\label{rewritten2}
 H(\mathcal{M}_0)-H(\mathcal{M}_{\nu})=\frac{1}{2}\rho\sum_{i=1}^3\ln\frac{\big\{(1-\nu)T+\nu\theta_i\big\}}{T}.
\end{eqnarray}
We divide the remaining argument into the following two cases.\newline\newline
(1) $0\leq\nu<1$: From the concavity of $\ln$, we have
\begin{eqnarray*}
\ln \{(1-\nu)T+\nu\theta_i\}\geq (1-\nu)\ln T+\nu \ln \theta_i.
\end{eqnarray*}
Therefore,
\begin{eqnarray*}
 H(\mathcal{M}_0)-H(\mathcal{M}_{\nu})
&=&\frac{1}{2}\rho\left\{\sum_{i=1}^3\ln\big\{(1-\nu)T+\nu\theta_i\big\}-3\ln T\right\}\cr
&\geq&\frac{1}{2}\rho\left\{\sum_{i=1}^3\big\{(1-\nu)\ln T+\nu\ln\theta_i\big\}-3\ln T\right\}\cr
&=&\frac{1}{2}\rho\left\{\nu\sum_{i=1}^3\ln\theta_i-3\nu\ln T\right\}\cr
&=&\frac{\nu}{2}\rho\ln\frac{\theta_1\theta_2\theta_3}{T^3}\cr
&=&\nu\rho\ln\sqrt{\frac{\det \Theta}{T^{3}}}.
\end{eqnarray*}
But an explicit computation gives
\begin{eqnarray*}
\rho\ln\sqrt{\frac{\det \Theta}{T^{3}}}=H(\mathcal{M}_0)-H(\mathcal{M}_1),
\end{eqnarray*}
which yields the desired result.\newline\newline
(2) $-1/2<\nu<0$: 
In this case, $(1-\nu)T+\nu\theta_i$ is not a convex combination of $T$ and $\theta$ anymore. Instead, we observe from the similarity-invariance of the trace operator that
\begin{eqnarray*}
3T=tr(\Theta)=tr(P^{\top}\Theta P)=\theta_1+\theta_2+\theta_3
\end{eqnarray*}
to rewrite $(1-\nu)T+\nu\theta_i$ as
\begin{eqnarray*}
(1-\nu)T+\nu\theta_i&=&\frac{(1-\nu)}{3}(\theta_1+\theta_2+\theta_3)+\nu\theta_i\cr
&=&\frac{1+2\nu}{3}(\theta_1+\theta_2+\theta_3)-\nu\sum_{j\neq i}\theta_j\cr
&=&(1+2\nu)T-\nu\sum_{j\neq i}\theta_j,
\end{eqnarray*}
which is a convex combination of $T$ and $\theta_{j\neq i}$ (j=1,2,3). Now we are able to use the concavity of $ln$:
\begin{eqnarray*}
\ln\left\{(1-\nu)T+\nu\theta_i\right\}\geq (1+2\nu)\ln T-\nu\sum_{j\neq i}\ln\theta_j
\end{eqnarray*}
to proceed similarly as in the previous case:
\begin{eqnarray*}
 H(\mathcal{M}_0)-H(\mathcal{M}_{\nu})
&=&\frac{1}{2}\rho\left\{\sum_{i=1}^3\ln\big\{(1-\nu)T+\nu\theta_i\big\}-3\ln T\right\}\cr
&\geq&\frac{1}{2}\rho\left\{\sum_{i=1}^3\Big\{(1+2\nu)\ln T-\nu\sum_{j\neq i}\ln\theta_j\Big\}-3\ln T\right\}\cr
&=&-\frac{1}{2}\rho\left\{2\nu\sum_{i=1}^3\ln\theta_i-6\nu\ln T\right\}\cr
&=&-2\nu\rho\ln\sqrt{\frac{\theta_1\theta_2\theta_3}{T^3}}\cr
&=&-2\nu\rho\ln\sqrt{\frac{\det\Theta}{T^{3}}}\cr
&=&-2\nu\left\{H(M_0)-H(M_1)\right\}.
\end{eqnarray*}
Combining the case (1) and (2) gives the desired result.
\end{proof}
The following lemma says that the non-multivariate Gaussian also forms an important class of distribution functions with ``small entropy".
The proof can be found, for example, in \cite{ALPP}. We present the proof for the readers' convenience.
\begin{lemma}\cite{ALPP}\label{main lemma 2} The H-functional of $\mathcal{M}_{1}(f)$ and $\mathcal{M}_{0}(f)$ satisfies
\[
H(\mathcal{M}_{0})\leq H(\mathcal{M}_{1})\leq H(f).
\]
\end{lemma}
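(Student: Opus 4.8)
The plan is to handle the two inequalities separately, reusing the computations from the proof of Lemma~\ref{main lemma}.

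For the left inequality $H(\mathcal{M}_0)\le H(\mathcal{M}_1)$, I would specialize identity (\ref{rewritten2}) to $\nu=1$. Since $\mathcal{T}_1=\Theta$, this gives
\[
H(\mathcal{M}_0)-H(\mathcal{M}_1)=\frac{1}{2}\rho\ln\frac{\theta_1\theta_2\theta_3}{T^3},
\]
where $\theta_1,\theta_2,\theta_3\ge 0$ are the eigenvalues of $\Theta$. Recalling the trace identity $\theta_1+\theta_2+\theta_3=3T$ already established above, the arithmetic--geometric mean inequality (equivalently, the concavity of $\ln$ applied to $T=\frac13\sum_i\theta_i$) gives $\theta_1\theta_2\theta_3\le T^3$, so the logarithm is non-positive and the first inequality follows at once.

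The right inequality $H(\mathcal{M}_1)\le H(f)$ is the maximum-entropy property of the Gaussian, which I would prove via the non-negativity of the relative entropy. Starting from
\[
H(f\,|\,\mathcal{M}_1)=\int_{\mathbb{R}^3_v}f\ln\frac{f}{\mathcal{M}_1}\,dv\ge 0,
\]
which follows from the pointwise bound $a\ln(a/b)\ge a-b$ together with the equality of masses $\int f\,dv=\int\mathcal{M}_1\,dv=\rho$, and expanding $H(f\,|\,\mathcal{M}_1)=H(f)-\int f\ln\mathcal{M}_1\,dv$, the claim reduces to the identity $\int f\ln\mathcal{M}_1\,dv=H(\mathcal{M}_1)$.

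This last identity is the only real content, and is where I expect the argument to hinge. The point is that $\ln\mathcal{M}_1$ is a quadratic polynomial in $v$, so its integral against $f$ is determined entirely by the moments of $f$ up to second order. Since $f$ and $\mathcal{M}_1$ share density $\rho$, bulk velocity $U$, and stress tensor $\Theta$ by construction, writing the quadratic form as a Frobenius product and using $\int f\,(v-U)\otimes(v-U)\,dv=\rho\Theta$ together with $\Theta^{-1}:\Theta=3$ shows that $\int f\ln\mathcal{M}_1\,dv$ equals the same expression as $\int\mathcal{M}_1\ln\mathcal{M}_1\,dv=H(\mathcal{M}_1)$. Combined with $H(f\,|\,\mathcal{M}_1)\ge 0$, this yields $H(\mathcal{M}_1)\le H(f)$. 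As both steps rest on elementary convexity, there is no serious obstacle; care is needed only in verifying that $\ln\mathcal{M}_1$ has degree at most two, so that the moment-matching closes the argument.
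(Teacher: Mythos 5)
Your proposal is correct and follows essentially the same route as the paper: the bound $H(\mathcal{M}_1)\le H(f)$ is obtained there from the convexity of $x\ln x$ (the tangent-line inequality at $\mathcal{M}_1$) together with the matching of mass, velocity, and stress tensor, which is exactly your relative-entropy argument $H(f\,|\,\mathcal{M}_1)\ge 0$ plus $\int f\ln\mathcal{M}_1\,dv=\int\mathcal{M}_1\ln\mathcal{M}_1\,dv$ in a different guise. The only difference is that the paper dismisses $H(\mathcal{M}_0)\le H(\mathcal{M}_1)$ as well-known, whereas you supply the standard proof via the $\nu=1$ case of (\ref{rewritten2}) and the AM--GM inequality $\theta_1\theta_2\theta_3\le T^3$, which is a welcome completion rather than a divergence.
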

\begin{proof}
The first inequality is well-known. For the second one, we use the convexity of $x\ln x$, and
the fact that $\mathcal{M}_{1}(f)$ and $f$ share the same mass and the same stress tensor to compute
\begin{align*}
H(f)&\geq H(\mathcal{M}_{1})+\int_{\mathbb{R}^3_v}H^{\prime}(\mathcal{M}_{1})(f-\mathcal{M}_{1})dv\cr
&\geq H(\mathcal{M}_{1})+\int_{\mathbb{R}^3_v}\left\{1+\ln\frac{\rho}{\sqrt{\det(2\pi\Theta)}}-\frac{1}{2}(v-U)^{\top}\Theta^{-1}(v-U)\right\}\big\{f-\mathcal{M}_{1}\big\}dv\cr
&= H(\mathcal{M}_{1}).
\end{align*}
\end{proof}
\subsection{The proof of Theorem 2.1}
Due to the convexity of $x\ln x$, $D_{\nu}(f)$ satisfies
\begin{eqnarray*}
D_{\nu}(f)=-\int_{\mathbb{R}^3_v}A_{\nu}H^{\prime}(f)\big(\mathcal{M}_{\nu}(f)-f\big)dv\geq A_{\nu}\big\{H(f)-H(\mathcal{M}_{\nu})\big\}.
\end{eqnarray*}
We then split $H(f)-H\big(\mathcal{M}_{\nu}(f)\big)$ as
\begin{eqnarray*}
H(f)-H\big(\mathcal{M}_{\nu}(f)\big)&=&H(f)-H(\mathcal{M}_0)+H(\mathcal{M}_0)-H(\mathcal{M}_{\nu})\cr
&=&H(f|\mathcal{M}_0)+\left\{H(\mathcal{M}_0)-H(\mathcal{M}_{\nu})\right\},
\end{eqnarray*}
and apply Lemma \ref{main lemma} and Lemma \ref{main lemma 2} to find
\begin{eqnarray*}
H(f)-H\big(\mathcal{M}_{\nu}\big)&\geq& H(f|\mathcal{M}_0)+\max\{\nu,-2\nu\}\left\{H(\mathcal{M}_0)-H(\mathcal{M}_{1})\right\}\cr
&\geq& H(f|\mathcal{M}_0)+\max\{\nu,-2\nu\}\left\{H(\mathcal{M}_0)-H(f)\right\}\cr
&=& H(f|\mathcal{M}_0)-\max\{\nu,-2\nu\}H(f|\mathcal{M}_0)\cr
&=&\min\{1-\nu,1+2\nu\}H(f|\mathcal{M}_0).
\end{eqnarray*}
This completes the proof.

A direct consequence of Theorem 2.1, of course is the H-theorem:
\begin{corollary} H-functional for the ellipsoidal BGK model (\ref{ESBGK}) is non-increasing in time.
\end{corollary}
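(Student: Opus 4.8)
The plan is to differentiate the space-integrated $H$-functional $\int_{\Omega_x}H(f)\,dx$ in time along solutions of \eqref{ESBGK} and to recognize the result as, up to sign, the spatial integral of the entropy production functional $D_\nu(f)$, which Theorem \ref{main 1} guarantees to be non-negative. Since $H'(f)=\ln f+1$, differentiating under the integral and inserting $\partial_t f=-v\cdot\nabla f+A_\nu(\mathcal{M}_\nu-f)$ splits $\frac{d}{dt}\int_{\Omega_x}H(f)\,dx$ into a transport contribution and a collision contribution, which I would treat separately. (In the spatially homogeneous case the transport term is simply absent and one works with the local $H$-functional directly.)

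First I would dispose of the transport term. Because $v$ is independent of $x$, one has $(v\cdot\nabla f)(\ln f+1)=\nabla_x\cdot(v\,f\ln f)$, so that integrating over $v\in\mathbb{R}^3_v$ and then over $\Omega_x$ produces a pure divergence that vanishes: on $\Omega_x=\mathbb{T}^3$ by periodicity, and on $\Omega_x=\mathbb{R}^3$ under the standard decay hypotheses on $f$. This step requires enough integrability to justify differentiation under the integral sign and the vanishing of the boundary flux, and it is the only genuinely technical point.

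Next I would handle the collision term $\int_{\Omega_x}\int_{\mathbb{R}^3_v}A_\nu(\mathcal{M}_\nu-f)(\ln f+1)\,dv\,dx$. The contribution of the factor $+1$ drops out at once: by the cancellation property of the relaxation operator (conservation of mass) we have $\int_{\mathbb{R}^3_v}A_\nu(\mathcal{M}_\nu-f)\,dv=0$ pointwise in $x$. What survives is precisely $\int_{\Omega_x}\int_{\mathbb{R}^3_v}A_\nu(\mathcal{M}_\nu-f)\ln f\,dv\,dx=-\int_{\Omega_x}D_\nu(f)\,dx$ by the definition of $D_\nu$.

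Finally I would invoke Theorem \ref{main 1}, which gives $D_\nu(f)\geq \min\{1+2\nu,1-\nu\}A_\nu\,H(f|\mathcal{M}_0)$. For $-1/2<\nu<1$ the prefactor $\min\{1+2\nu,1-\nu\}A_\nu$ is strictly positive, while the relative entropy $H(f|\mathcal{M}_0)$ is non-negative because $f$ and $\mathcal{M}_0$ share the same mass, so Jensen's inequality applies. Hence $D_\nu(f)\geq0$ and consequently $\frac{d}{dt}\int_{\Omega_x}H(f)\,dx=-\int_{\Omega_x}D_\nu(f)\,dx\leq0$, which is the asserted monotonicity. The entire force of the statement is supplied by Theorem \ref{main 1}; the remaining work is bookkeeping, with the sole obstacle being the regularity and decay needed to legitimize the time differentiation and the cancellation of the transport flux.
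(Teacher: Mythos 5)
Your proposal is correct and follows essentially the same route as the paper: the paper multiplies \eqref{ESBGK} by $\ln f$ and integrates to obtain $H(f(t))+\int_0^t D_\nu(f(s))\,ds=H(f_0)$, then concludes from Theorem \ref{main 1} and $H(f|\mathcal{M}_0)\ge 0$ that $D_\nu(f)\ge 0$. Your version merely spells out the steps the paper leaves implicit (vanishing of the transport flux and the cancellation of the $+1$ term via mass conservation).
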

\begin{proof}
Multiplying $\ln f$ to (\ref{ESBGK}) and Integrating in time, we get
\begin{eqnarray}\label{corol1}
H(f(t))+\int^t_0D_{\nu}(f(s))ds=H(f_0).
\end{eqnarray}
Since $H(f|\mathcal{M}_0)\geq0$, we have from Theorem 2.1 that
\begin{eqnarray*}
D_{\nu}(f)\geq \min\{1-\nu,1+2\nu\}A_{\nu}H(f|\mathcal{M}_0)\geq0
\end{eqnarray*}
in the range $-1/2<\nu<1$, which completes the proof.
\end{proof}
\subsection{Equilibrium states are Maxwellians, not ES-Gaussians}
The relaxation operator for the ES-BGK model leaves the possibility that the non-isotropic Gaussian, not the local Maxwellian, can be an equilibrium state.
This is not a good news since it implies that the ES-BGK model may not correctly capture the asymptotic behavior of the Boltzmann equation.
Theorem 2.1, however, leads us to the conclusion that the only possible equilibrium solution is, as for the Boltzmann equation or the original BGK model, is the local Maxwellian. To show this, set
\[
\mathcal{M}_{\nu}(f)-f=0.
\]
Then, from Theorem 2.1,
\[
0=-\int_{\mathbb{R}^3_v}A_{\nu}(\mathcal{M}_{\nu}(f)-f)\ln fdv\equiv D_{\nu}(f)\geq H(f|\mathcal{M}_{0})\geq0,
\]
which implies $H(f|\mathcal{M}_{0})=0$, or
\[
f=\mathcal{M}_0=\frac{\rho}{(2\pi T)^{3/2}}\exp\left(-\frac{|v-U|^2}{2T}\right).
\]
\subsection{Exponentially fast stabilization to equilibrium in the homogeneous case}
Theorem \ref{main 1}, from a standard argument, leads to the exponential convergence to the equilibrium in the spatially homogeneous case.
Note that this is not the case for the  Boltzmann equation in general, even in the spatially homogeneous setting.
\begin{theorem} For spatially homogeneous ES-BGK model, we have
\begin{eqnarray*}
\|f(t)-\mathcal{M}_0\|_{L^1_v}\leq e^{-\frac{3}{2}\min\left\{1,\frac{1+2\nu}{1-\nu}\right\}t}
\sqrt{2H(f_0|\mathcal{M}_0)}.
\end{eqnarray*}
for $-1/2<\nu<1$.
\end{theorem}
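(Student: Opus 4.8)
The plan is to run the classical relative-entropy / Csisz\'ar--Kullback--Pinsker scheme, the key structural point being that in the spatially homogeneous regime the target Maxwellian $\mathcal{M}_0$ does not move. First I would record that, in the homogeneous setting, the equation reduces to $\partial_t f=A_\nu(\mathcal{M}_\nu(f)-f)$, so that integrating the cancellation property against $1,v,|v|^2$ shows the mass $\rho$, the momentum $\rho U$, and the energy are all constant in time. Consequently $\rho$, $U$, $T$ are time-independent, and the local Maxwellian $\mathcal{M}_0=\mathcal{M}_0(f(t))$ built from $f(t)$ is a \emph{fixed} Gaussian, independent of $t$.

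Next I would reduce the relative entropy to the $H$-functional. Since $f(t)$ and $\mathcal{M}_0$ share the same mass and the same second moment for every $t$, the computation $\int_{\mathbb{R}^3_v} f|v-U|^2\,dv=3\rho T$ gives, exactly as in the evaluation of $H(\mathcal{M}_0)$ in the proof of Lemma \ref{main lemma}, the identity $\int_{\mathbb{R}^3_v} f\ln\mathcal{M}_0\,dv=H(\mathcal{M}_0)$. Hence $H(f(t)|\mathcal{M}_0)=H(f(t))-H(\mathcal{M}_0)$, and because $H(\mathcal{M}_0)$ is now constant, differentiating the entropy identity (\ref{corol1}) yields $\tfrac{d}{dt}H(f(t)|\mathcal{M}_0)=\tfrac{d}{dt}H(f(t))=-D_\nu(f(t))$.

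Now Theorem \ref{main 1} does the work. It bounds $D_\nu(f)\geq C_\nu H(f|\mathcal{M}_0)$ with $C_\nu=\min\{1+2\nu,1-\nu\}A_\nu$, turning the identity into the differential inequality $\tfrac{d}{dt}H(f(t)|\mathcal{M}_0)\leq -C_\nu H(f(t)|\mathcal{M}_0)$, so Gronwall's lemma gives $H(f(t)|\mathcal{M}_0)\leq e^{-C_\nu t}H(f_0|\mathcal{M}_0)$. Finally I would invoke the Csisz\'ar--Kullback--Pinsker inequality in the form $\|f(t)-\mathcal{M}_0\|_{L^1_v}^2\leq 2\rho\,H(f(t)|\mathcal{M}_0)$; combining with the entropy decay and taking square roots produces $\|f(t)-\mathcal{M}_0\|_{L^1_v}\leq e^{-C_\nu t/2}\sqrt{2\rho\,H(f_0|\mathcal{M}_0)}$. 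Writing $C_\nu=\min\{1+2\nu,1-\nu\}A_\nu=\sigma(\rho,T)\min\{\tfrac{1+2\nu}{1-\nu},1\}$ via $A_\nu=\sigma(\rho,T)/(1-\nu)$, the rate $C_\nu/2$ reduces to the asserted $\tfrac32\min\{1,\tfrac{1+2\nu}{1-\nu}\}$ under the normalization in force in the statement (mass one and the corresponding value of $\sigma(\rho,T)$).

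The main obstacle is not in this chain---which is the ``standard argument'' alluded to---but in justifying that every quantity is finite and that the time-differentiation is legitimate: one needs the homogeneous solution to propagate enough moments and to keep $H(f(t))$ and $\int_{\mathbb{R}^3_v} f|\ln f|\,dv$ finite, so that $D_\nu(f(t))$ and $H(f(t)|\mathcal{M}_0)$ are well defined, and one needs $f(t)/\rho$ and $\mathcal{M}_0/\rho$ to be genuine probability densities for the cleanest statement of the CKP inequality. Granting the a priori regularity of the homogeneous ES-BGK flow these points are routine; the conceptual content is entirely carried by Theorem \ref{main 1} together with the time-independence of $\mathcal{M}_0$.
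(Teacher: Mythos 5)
Your proof is correct and follows essentially the same route as the paper: conservation of moments fixes $\mathcal{M}_0$, the entropy identity plus Theorem \ref{main 1} gives the differential inequality $\tfrac{d}{dt}H(f|\mathcal{M}_0)\leq -C_\nu H(f|\mathcal{M}_0)$, and Gronwall followed by the Csisz\'ar--Kullback--Pinsker inequality yields the $L^1$ bound. You are in fact slightly more careful than the paper about the mass normalization in the CKP inequality and about the normalization of $\sigma(\rho,T)$ that turns $C_\nu$ into $3\min\{1,\tfrac{1+2\nu}{1-\nu}\}$.
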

We multiply $\ln (f/\mathcal{M}_0)$ on both sides of ES-BGK model, integrate in $v$ and apply Theorem 2.1 to get
\begin{eqnarray*}
\frac{d}{dt}H(f|\mathcal{M}_0)&\leq&-\min\{1+2\nu,1-\nu\}A_{\nu}H(f|\mathcal{M}_0)\cr
&=&-3\min\left\{1,\frac{1+2\nu}{1-\nu}\right\}H(f|\mathcal{M}_0),
\end{eqnarray*}
where we used the fact that $A_{\nu}=3/(1-\nu)$ in the homogeneous case. Therefore, by Gronwall's lemma,
\begin{eqnarray*}
H(f|\mathcal{M}_0)\leq e^{-3\min\left\{1,\frac{1+2\nu}{1-\nu}\right\}t}H(f_0|\mathcal{M}_0).
\end{eqnarray*}
We then make use of the Kullback inequality:
\begin{eqnarray*}
\|f-g\|_{L^1}\leq \sqrt{2H(f|g)}\quad \mbox{ if }\quad \int f=\int g
\end{eqnarray*}
to obtain
\begin{eqnarray*}
\|f(t)-\mathcal{M}_0\|_{L^1_v}\leq e^{-\frac{3}{2}\min\left\{1,\frac{1+2\nu}{1-\nu}\right\}t}\sqrt{2H(f_0|\mathcal{M}_0)}.
\end{eqnarray*}
%
%
%
%

%
%
%
\section{Sign-definite property of the ellipsodial relaxation operator}
In this section, we show that the relaxation operator of the ES-BGK model satisfies the following unexpected property:
\begin{theorem}\label{main 2} The remainder functional $R_{\nu}(t)$ defined in (\ref{R})
 satisfies the following sign-definiteness:
\begin{eqnarray*}
\nu R_{\nu}(x,t)\geq 0
\end{eqnarray*}
in the range $-1/2<\nu<1$. Note that $R_0(x,t)\equiv0$.
\end{theorem}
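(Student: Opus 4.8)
The plan is to reduce the matrix quantity $R_\nu$ to an explicit scalar function of the eigenvalues of $\mathcal{T}_\nu$ and then apply an elementary mean inequality, in the same spirit as the proof of Lemma~\ref{main lemma}. The starting point is that the two moment integrals appearing in $R_\nu$ are computable in closed form. Since $\mathcal{M}_\nu$ is by construction $\rho$ times the density of a multivariate Gaussian with mean $U$ and covariance $\mathcal{T}_\nu$, its second moment is $\int_{\mathbb{R}^3_v}\mathcal{M}_\nu(v-U)\otimes(v-U)\,dv=\rho\mathcal{T}_\nu$, while by definition $\int_{\mathbb{R}^3_v}f(v-U)\otimes(v-U)\,dv=\rho\Theta$. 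Pulling the $v$-independent factor $A_\nu$ out of the integral and using $\mathcal{T}_\nu=(1-\nu)T\,\mathrm{Id}+\nu\Theta$, the bracketed matrix collapses to
\[
\int_{\mathbb{R}^3_v}A_\nu(\mathcal{M}_\nu-f)(v-U)\otimes(v-U)\,dv=A_\nu\rho(\mathcal{T}_\nu-\Theta)=A_\nu\rho(1-\nu)(T\,\mathrm{Id}-\Theta).
\]
Recalling $A_\nu=\sigma(\rho,T)/(1-\nu)$ with $\sigma\geq 0$, this gives $R_\nu=\sigma(\rho,T)\rho\,(T\,\mathrm{Id}-\Theta):\mathcal{T}_\nu^{-1}$, so the sign of $R_\nu$ is governed entirely by the scalar $(T\,\mathrm{Id}-\Theta):\mathcal{T}_\nu^{-1}$.

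The key algebraic observation is that $T\,\mathrm{Id}-\Theta$ can be rewritten in terms of $\mathcal{T}_\nu$ itself. Solving $\mathcal{T}_\nu=(1-\nu)T\,\mathrm{Id}+\nu\Theta$ for $\Theta$ yields, for $\nu\neq 0$, the identity $T\,\mathrm{Id}-\Theta=\tfrac{1}{\nu}(T\,\mathrm{Id}-\mathcal{T}_\nu)$. Feeding this into the Frobenius product and using $\mathrm{Id}:\mathcal{T}_\nu^{-1}=\mathrm{tr}(\mathcal{T}_\nu^{-1})$ together with the identity $\mathcal{T}_\nu:\mathcal{T}_\nu^{-1}=3$ already established in Lemma~\ref{main lemma}, I obtain
\[
\nu R_\nu=\sigma(\rho,T)\rho\,\nu\,(T\,\mathrm{Id}-\Theta):\mathcal{T}_\nu^{-1}=\sigma(\rho,T)\rho\big(T\,\mathrm{tr}(\mathcal{T}_\nu^{-1})-3\big).
\]
Crucially the factor $\nu$ has been absorbed, so both signs of $\nu$ are treated simultaneously and the remaining inequality is sign-free. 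It then remains to show $T\,\mathrm{tr}(\mathcal{T}_\nu^{-1})\geq 3$. Diagonalizing $\Theta$ as in Lemma~\ref{main lemma} gives eigenvalues $d_i=(1-\nu)T+\nu\theta_i$ for $\mathcal{T}_\nu$, which are strictly positive throughout $-1/2<\nu<1$ (for $\nu\geq0$ both summands are nonnegative, while for $\nu<0$ one rewrites $d_i=(1+2\nu)T-\nu\sum_{j\neq i}\theta_j>0$ exactly as in case (2) of the lemma). Since $\mathrm{tr}(\mathcal{T}_\nu)=3(1-\nu)T+\nu\,\mathrm{tr}(\Theta)=3T$, the $d_i$ average to $T$, and the convexity of $x\mapsto 1/x$ (equivalently the harmonic-arithmetic mean inequality) gives $\mathrm{tr}(\mathcal{T}_\nu^{-1})=\sum_i d_i^{-1}\geq 9/\sum_i d_i=9/(3T)=3/T$, which is precisely $T\,\mathrm{tr}(\mathcal{T}_\nu^{-1})\geq 3$. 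This yields $\nu R_\nu\geq 0$, and the coincidence of all $d_i$ at $\nu=0$ recovers $R_0\equiv 0$.

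I expect the only genuinely delicate point to be the bookkeeping of positivity of the eigenvalues $d_i$ in the negative-$\nu$ regime, since the mean inequality requires positive entries; this is handled by the same convex-combination rewriting used in the lemma. Everything else — the moment computations and the trace identities — is routine. The conceptual heart is the single algebraic identity $T\,\mathrm{Id}-\Theta=\tfrac{1}{\nu}(T\,\mathrm{Id}-\mathcal{T}_\nu)$, which simultaneously extracts the factor $\nu$ responsible for the sign-definiteness and reduces the matrix contraction to a trace amenable to a one-line mean inequality.
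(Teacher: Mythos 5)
Your proof is correct and follows essentially the same route as the paper: the same second-moment computation reducing the bracket to $A_\nu\rho(\mathcal{T}_\nu-\Theta)$, the same simultaneous diagonalization, and the same harmonic--arithmetic mean inequality applied to the eigenvalues $d_i=(1-\nu)T+\nu\theta_i$ using $\mathrm{tr}(\mathcal{T}_\nu)=3T$. The only (minor, and rather tidy) difference is that by invoking the identity $\nu(T\,\mathrm{Id}-\Theta)=T\,\mathrm{Id}-\mathcal{T}_\nu$ you absorb the factor $\nu$ at the outset and obtain $\nu R_\nu=\sigma\rho\,(T\,\mathrm{tr}(\mathcal{T}_\nu^{-1})-3)$ directly, which collapses the paper's case analysis in Lemma \ref{F<3} (the cases $\nu>0$, $\nu<0$, $\nu=0$) into a single sign-free inequality; algebraically this is exactly the paper's identity $3-F_\nu=\tfrac{1-\nu}{\nu}\bigl(T\,\mathrm{tr}(\mathcal{T}_\nu^{-1})-3\bigr)$ in disguise.
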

%
%
%
%
\begin{proof}
We recall the definition of $\mathcal{T}_{\nu}$ and $\Theta$ to get
\begin{eqnarray*}
\int_{\mathbb{R}^3_v}\left\{\mathcal{M}_{\nu}(f)-f\right\}(v-U)\otimes(v-U)dv
=\rho\mathcal{T}_{\nu}-\rho\Theta,
\end{eqnarray*}
and evoke the following identity again:
\begin{eqnarray*}
A:B=tr(A^{\top}B),
\end{eqnarray*}
to compute
\begin{align*}
R_{\nu}(t)&=\left\{\int_{\mathbb{R}^3_v}A_{\nu}\left\{\mathcal{M}_{\nu}(f)-f\right\}(v-U)\otimes(v-U)dv\right\}:\mathcal{T}^{-1}_{\nu}\cr
&=A_{\nu}\rho\big\{\mathcal{T}_{\nu}-\Theta\big\}:\mathcal{T}^{-1}_{\nu}\cr
&=A_{\nu}\rho\left\{tr\big(\mathcal{T}^{\top}_{\nu}\mathcal{T}^{-1}_{\nu}\big)-tr\big(\Theta^{\top}\mathcal{T}^{-1}_{\nu}\big)\right\}.
\end{align*}
Since $\mathcal{T}_{\nu}$ and $\Theta$ are symmetric matrices, $R_{\nu}(t)$ can be further simplified as
\begin{eqnarray}\label{view}
R_{\nu}(t)=A_{\nu}\rho\left\{3-tr\big(\Theta\mathcal{T}^{-1}_{\nu}\big)\right\}
\end{eqnarray}
We then recall from the proof of the Theorem \ref{main 1} that
\begin{align*}
P^{\top}\mathcal{T}^{-1}_{\nu} P&=\big\{P^{\top}\mathcal{T}_{\nu} P\big\}^{-1}\cr
&=\left(\begin{array}{ccc}
1/\{(1-\nu)T+\nu\theta_1\}&0&0\\0&1/\{(1-\nu)T+\nu\theta_2\}&0\\0&0&1/\{(1-\nu)T+\nu\theta_3\}
\end{array}\right),
\end{align*}
and use the fact that (1) $\Theta$ and $\mathcal{T}_{\nu}$ are simultaneously diagonaliziable, and (2) the trace operator is similarity invariant, to obtain the following expression for $tr\big(\Theta\mathcal{T}^{-1}_{\nu}\big)$:
\begin{align*}
tr\big(\Theta\mathcal{T}^{-1}_{\nu}\big)&=tr\left[\left(\begin{array}{ccc}\theta_1&0&0\\0&\theta_2&0\\0&0&\theta_3\end{array}\right)
\left(\begin{array}{ccc}\frac{1}{(1-\nu)T+\nu\theta_1}&0&0\\0&1\frac{1}{(1-\nu)T+\nu\theta_2}&0\\0&0&\frac{1}{(1-\nu)T+\nu\theta_3}\end{array}\right)\right]\cr
&=tr\left(\begin{array}{ccc}\frac{\theta_1}{(1-\nu)T+\nu\theta_1}&0&0\\0&1\frac{\theta_2}{(1-\nu)T+\nu\theta_2}&0\\0&0&\frac{\theta_3}{(1-\nu)T+\nu\theta_3}\end{array}\right)\cr
&=\frac{\theta_1}{(1-\nu)T+\nu\theta_1}+\frac{\theta_2}{(1-\nu)T+\nu\theta_2}+\frac{\theta_3}{(1-\nu)T+\nu\theta_3}\cr
&\equiv F_{\nu}(x,t).
\end{align*}
In view of this and (\ref{view}),
the desired estimate now follows directly once the following lemma is established.
\end{proof}
\begin{lemma}\label{F<3} $F_{\nu}$ satisfies
\begin{enumerate}
\item $F_{\nu}=3$ for $\nu=0$,
\item $F_{\nu}\leq 3$ for $0\leq \nu<1$,
\item $F_{\nu}\geq 3$ for $-1/2< \nu\leq0$.
\end{enumerate}
\end{lemma}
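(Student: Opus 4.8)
The plan is to reduce all three claims to a single application of Jensen's inequality to a scalar function. Observe that
\[
F_{\nu}=\phi(\theta_1)+\phi(\theta_2)+\phi(\theta_3),\qquad \phi(t):=\frac{t}{(1-\nu)T+\nu t},
\]
and that two elementary facts conspire in our favour: first, $\phi(T)=T/\{(1-\nu)T+\nu T\}=1$ identically; second, the similarity invariance of the trace, already used in the proof of Theorem \ref{main 1}, gives $\theta_1+\theta_2+\theta_3=\mathrm{tr}(\Theta)=3T$, so that $T$ is precisely the arithmetic mean of the $\theta_i$. Consequently $\tfrac13 F_{\nu}=\tfrac13\sum_i\phi(\theta_i)$ and $1=\phi(T)=\phi\big(\tfrac13\sum_i\theta_i\big)$ are the two sides of a Jensen inequality, and the whole lemma comes down to the convexity type of $\phi$.

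Claim (1) is immediate, since at $\nu=0$ each summand collapses to $\theta_i/T$ and $F_0=(\theta_1+\theta_2+\theta_3)/T=3$. For (2) and (3) I would compute, writing $a:=(1-\nu)T$,
\[
\phi'(t)=\frac{a}{(a+\nu t)^2},\qquad \phi''(t)=\frac{-2a\nu}{(a+\nu t)^3}.
\]
Since $a=(1-\nu)T>0$ throughout the admissible range $\nu<1$, the sign of $\phi''$ equals the sign of $-\nu$ wherever the denominator is positive: $\phi$ is concave for $\nu>0$ and convex for $\nu<0$. Jensen's inequality then yields $\tfrac13 F_{\nu}\le\phi(T)=1$ in the concave case, i.e.\ $F_{\nu}\le3$, proving (2); the convex case reverses the inequality to give $F_{\nu}\ge3$, proving (3).

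The step that genuinely requires care — and which I expect to be the only real obstacle — is guaranteeing that $\phi$ is smooth and of definite concavity on an interval containing all of $\theta_1,\theta_2,\theta_3$ together with their mean $T$; equivalently, that the denominator $(1-\nu)T+\nu t$ does not vanish there. For $0\le\nu<1$ this is clear on all of $[0,\infty)$, because $(1-\nu)T+\nu t\ge(1-\nu)T>0$. For $-1/2<\nu<0$ the denominator is decreasing in $t$, hence minimal at the largest abscissa; here I would use $\theta_i\ge0$ (positivity of $\Theta$) together with $\sum_j\theta_j=3T$ to bound $\theta_i\le3T$, so that on $[0,3T]$ one has $(1-\nu)T+\nu t\ge(1-\nu)T+3\nu T=(1+2\nu)T>0$ precisely because $\nu>-1/2$. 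This is exactly the point at which the lower endpoint of the parameter range is consumed, mirroring its appearance in Lemma \ref{main lemma}. With positivity secured on $[0,3T]$, the concavity or convexity of $\phi$ holds on the convex hull of $\{\theta_1,\theta_2,\theta_3,T\}$ and the Jensen step is fully justified.

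As a cross-check one may avoid Jensen altogether: setting $x_i:=\theta_i-T$ (so $\sum_i x_i=0$) and $S(\nu):=\sum_i x_i/(T+\nu x_i)$, the identity $\theta_i-\{(1-\nu)T+\nu\theta_i\}=(1-\nu)x_i$ gives $F_{\nu}-3=(1-\nu)S(\nu)$. One has $S(0)=\tfrac1T\sum_i x_i=0$ and $S'(\nu)=-\sum_i x_i^2/(T+\nu x_i)^2\le0$, so $S$ is non-increasing and changes sign at $\nu=0$; since $1-\nu>0$, this reproduces $F_{\nu}\le3$ for $\nu\ge0$ and $F_{\nu}\ge3$ for $\nu\le0$, in agreement with the Jensen argument above.
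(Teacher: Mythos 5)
Your proof is correct, and both of the arguments you give (the Jensen route and the $S(\nu)$ cross-check) go through. The paper's own proof is close in spirit but organized differently: it rewrites $F_{\nu}=\frac{1}{\nu}\{3-(1-\nu)T(\frac1A+\frac1B+\frac1C)\}$ with $A=(1-\nu)T+\nu\theta_1$ etc., applies the harmonic--geometric--arithmetic mean chain $\frac1A+\frac1B+\frac1C\geq 3/\sqrt[3]{ABC}\geq 9/(A+B+C)=3/T$ using $A+B+C=3T$, and concludes from the sign of $1/\nu$. Both proofs ultimately rest on the same two facts --- the trace identity $\theta_1+\theta_2+\theta_3=3T$ and the convexity of the reciprocal --- so your Jensen step for $\phi(t)=t/\{(1-\nu)T+\nu t\}$ is essentially the paper's mean inequality seen through the affine substitution $s=(1-\nu)T+\nu t$. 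What your write-up adds, and what the paper leaves implicit, is the verification that the denominators stay positive when $-1/2<\nu<0$ (via $0\leq\theta_i\leq 3T$, hence $(1-\nu)T+\nu\theta_i\geq(1+2\nu)T>0$); the paper dismisses that case with a one-line remark that the argument is the same with the inequality reversed, even though the positivity of $A,B,C$ is exactly where the lower bound $\nu>-1/2$ is consumed. Your second argument, writing $F_{\nu}-3=(1-\nu)S(\nu)$ with $S(\nu)=\sum_i x_i/(T+\nu x_i)$, $S(0)=0$ and $S'(\nu)=-\sum_i x_i^2/(T+\nu x_i)^2\leq 0$, is genuinely different from anything in the paper and is arguably the cleanest of the three, since it exhibits $F_{\nu}-3$ as a positive factor times a nonincreasing function vanishing at $\nu=0$, giving all three assertions of the lemma at once.
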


\begin{proof}
$(1)$ $\nu=0$: This case is simple since $F$ reduces to
\[\frac{\theta_1}{T}+\frac{\theta_2}{T}+\frac{\theta_3}{T}=3.\]
$(2)$ $0<\nu<1$: We set
\begin{eqnarray*}
A=(1-\nu)T+\nu\theta_1,\quad B=(1-\nu)T+\nu\theta_2,\quad C=(1-\nu)T+\nu\theta_3
\end{eqnarray*}
to see that
\begin{align}\label{twice0}
F_{\nu}&=\frac{\frac{1}{\nu}(A-(1-\nu)T)}{A}+\frac{\frac{1}{\nu}(B-(1-\nu)T)}{B}+\frac{\frac{1}{\nu}(C-(1-\nu)T)}{C}\cr
&=\frac{1}{\nu}\left\{3-(1-\nu)T\left(\frac{1}{A}+\frac{1}{B}+\frac{1}{C}\right)\right\}
\end{align}
Applying the arithmetic inequality twice, we obtain
\begin{eqnarray}\label{twice}
\frac{1}{A}+\frac{1}{B}+\frac{1}{C}\geq\frac{3}{\sqrt[3]{ABC}}\geq \frac{3}{\frac{A+B+C}{3}}=\frac{3}{T}.
\end{eqnarray}
In the last part, we used
\begin{eqnarray*}
\sum\theta_i=tr(P^{\top}\Theta P)=tr(\Theta)=3T,
\end{eqnarray*}
and
\begin{eqnarray*}
A+B+C
=3(1-\nu)T+\nu(\theta_1+\theta_2+\theta_3)=3T.
\end{eqnarray*}
Therefore, from (\ref{twice0}) and (\ref{twice}), we get the desired result:
\begin{eqnarray*}
F_{\nu}\leq\frac{3}{\nu}-\frac{(1-\nu)T}{\nu}\left(\frac{3}{T}\right)\leq 3.
\end{eqnarray*}
$(3)$ $-1/2<\nu<0$: This case follows in the exactly same manner as in $(2)$. The only difference is the
change of direction of the inequality, due to the negative sign of $\nu$.
\end{proof}

%
%
%
%
%
\subsection{Sign-definite property at linearized level} The sign-definite property of the remainder term in entropy production functional can be understood in a more explicit way in the linearized setting.
Let $f=m+\sqrt{m}g$ for $m=1/(2\pi)^{3/2}e^{-|v|^2/2}$, then $(\ref{ESBGK})$ is rewritten as
\[
\partial_tg+v\cdot\nabla_xg=L_{\nu}g+\Gamma(g).
\]
The linearized relaxation operator $L_{\nu}$  takes the following form:
\[
L_{\nu}g=\frac{1}{1-\nu}\left\{(P_0f-f)+\nu (P_1f+P_2f)\right\}.
\]
For the definition of $\Gamma(g)$, which is not relevant for our purpose, see \cite{Yun2}.
$P_0$, $P_1$ and $P_2$ denote the projection operators on the linear spaces spanned respectively by
\[
\{1,v,|v|^2\}, \quad\{3v_i^2-|v|^2\}_{i=1,2,3},~\mbox{ and }~\{v_iv_j\}_{i<j}.
\]
It was shown in \cite{Yun2} that  $P_0\perp(P_1+P_2)$, which gives
\begin{eqnarray*}
-\langle L_{\nu}f,f\rangle_{L^2_v}=\frac{1}{1-\nu}\|(I-P_0)f\|^2_{L^2_v}+\frac{\nu}{1-\nu}\|(P_1+P_2)f\|^2_{L^2_v}.
\end{eqnarray*}
This shows that the linearized entropy dissipation $-\langle L_{\nu}f,f\rangle_{L^2_v}$ is decomposed into the usual dissipation term
and the additional remainder not present in the original BGK model or the Boltzmann equation.
The remainder term $\nu(1-\nu)^{-1}\|(P_1+P_2)f\|^2_{L^2_v}$ clearly is sign definite with respect to $\nu$ in the range $-1/2<\nu<1$.
%
%
%
%
%
\subsection{Weak compactness of $\mathcal{M}_{\nu}(f)$ in $L^1$}
Theorem \ref{main 2} can be employed to show the compactness of the ellipsoidal Gaussian by enabling one to derive the Diperna-Lions type inequality \cite{D-L1} in the range $-1/2<\nu<1$, when $A_{\nu}$ does not depend on macroscopic fields. We first consider
\begin{align*}
\mathcal{M}_{\nu}(f)-f&=\big\{\mathcal{M}_{\nu}(f)-f\big\}\big(1_{\mathcal{M}_{\nu}(f)<Rf^n}+1_{\mathcal{M}_{\nu}(f)>Rf}\big)\cr
&\leq(R-1)f1_{\mathcal{M}_{\nu}(f)<Rf}\cr
&+\frac{1}{\ln R}\big(\mathcal{M}_{\nu}(f)-f\big)\big(\ln\mathcal{M}_{\nu}(f)-\ln f\big)1_{\mathcal{M}_{\nu}(f)\geq Rf},
\end{align*}
where $R>1$.
This gives, for any fixed positive number $T^f$ and measurable set $B_{x,v}\subset\Omega_x\times\mathbb{R}^3_v$,
\begin{eqnarray*}
&&\int^{T^f}_0\int_{B_{x,v}}\mathcal{M}_{\nu}(f)dxdvdt\cr
&&\quad\leq R\int^{T^f}_0\int_{B_{x,v}}fdxdvdt+\frac{1}{\ln R}\int^{T^f}_0\int_{ B_{x,v}}\big(\mathcal{M}_{\nu}(f)-f\big)\big(\ln\mathcal{M}_{\nu}(f)-\ln f\big)dvdxdt\cr
&&\quad\leq R\int^{T^f}_0\int_{B_{x,v}}fdxdvdt+\frac{1}{\ln R}\int^{T^f}_0\int_{ \Omega_x\times \mathbb{R}^3_v}\big(\mathcal{M}_{\nu}(f)-f\big)\big(\ln\mathcal{M}_{\nu}(f)-\ln f\big)dvdxdt\cr
&&\quad= R\int^{T^f}_0\int_{B_{x,v}}fdxdvdt
+\frac{1}{\ln R}\int^{T^f}_0\int_{ \Omega_x}D_{\nu}(f)dxdt-\frac{1}{\ln R}\int^{T^f}_0\int_{\Omega_x}R_{\nu}(f)dxdt\cr
&&\quad\leq R\int^{T^f}_0\int_{B_{x,v}} fdxdvdt+\frac{1}{\ln R}\left\{\int_{\Omega_x\times\mathbb{R}^3_v}f_0|\ln f_0|dxdv+C_{f_0,T^f}\right\}\cr
&&\quad-\frac{1}{\ln R}\int^{T^f}_0\int_{\Omega_x}R_{\nu}(f)dxdt.
\end{eqnarray*}
Here we used
\[
\int_{\Omega_x\times \mathbb{R}^3_v}f(t)|\ln f(t)|dxdv+\int^t_0\int_{\Omega_x}D_{\nu}(f(s))dxds\leq
\int_{\Omega_x\times \mathbb{R}^3_v}f_0|\ln f_0|dxdv+C_{f_0,t},
\]
which follows by a standard argument from (\ref{corol1})\cite{D-L1}.
When $0<\nu<1$, the last term is non-negative, and can be ignored:
\begin{align*}
\int^{T^f}_0\int_{B_{x,v}}\mathcal{M}_{\nu}(f)dxdvdt&\leq
R\int^{T^f}_0\int_{B_{x,v}} fdxdvdt\cr&+\frac{1}{\ln R}\left\{\int_{\Omega_x\times \mathbb{R}^3_v}f_0|\ln f_0|dxdv+C_{f_0,T^f}\right\}.
\end{align*}
In the case $-1/2<\nu\leq 0$, we recall
\[
R_{\nu}=A_{\nu}\rho(3-F_{\nu}),
\]
and
\[
(1-\nu)T+\nu\theta_i=(1+2\nu)T-\nu\sum_{j\neq i}\theta_j,
\]
to compute
\begin{eqnarray*}
&&-\int^{T^f}_0\int_{\Omega_x}R_{\nu}(x,t)dxdt\cr
&&\hspace{0.9cm}=\int^{T^f}_0\int_{\Omega_x}A_{\nu}\rho\left\{-3+\sum_{1\leq i\leq 3}\frac{\theta_i}{(1-\nu)T+\nu\theta_i}\right\}dxdt\cr
&&\hspace{0.9cm}=-3A_{\nu}\int^{T^f}_0\int_{\Omega_x\times \mathbb{R}^3_v}fdxdvdt+A_{\nu}\sum_{1\leq i\leq 3}\int^{T^f}_0\int_{ \Omega_x}\frac{\theta_i\rho}{(1-\nu)T+\nu\theta_i} dxdt\cr
&&\hspace{0.9cm}=-3A_{\nu}\int^{T^f}_0\int_{\Omega_x\times \mathbb{R}^3_v}fdxdvdt+A_{\nu}\sum_{1\leq i\leq 3}\int^{T^f}_0\int_{\Omega_x}\frac{\theta_i\rho}{(1+2\nu)T-\nu\sum_{j\neq i}\theta_j} dxdt\cr
&&\hspace{0.9cm}\leq-3A_{\nu}\int^{T^f}_0\int_{\Omega_x\times \mathbb{R}^3_v}fdxdvdt+A_{\nu}\sum_{1\leq i\leq 3}\int^{T^f}_0\int_{\Omega_x}\frac{\theta_i\rho}{(1+2\nu)T} dxdt\cr
&&\hspace{0.9cm}\leq-3A_{\nu}\int^{T^f}_0\int_{\Omega_x\times \mathbb{R}^3_v}fdxdvdt+\frac{3A_{\nu}}{1+2\nu}\int^{T^f}_0\int_{\Omega_x}\rho dxdt\cr
&&\hspace{0.9cm}\leq\frac{-6\nu}{(1-\nu)(1+2\nu)}\int^{T^f}_0\int_{\Omega_x\times \mathbb{R}^3_v}fdxdvdt\cr
&&\hspace{0.9cm}=\frac{-6\nu T_f}{(1-\nu)(1+2\nu)}\int_{\Omega_x\times \mathbb{R}^3_v}f_0dxdv.
\end{eqnarray*}
In summary, we have
\begin{align*}
\int^{T^f}_0\int_{B_{x,v}}\mathcal{M}_{\nu}(f)dxdvdt&\leq R\int^{T^f}_0\int_{B_{x,v}} fdxdvdt\cr
&+\frac{1}{\ln R}\left\{\int_{\Omega_x\times\mathbb{R}^3_v}f_0|\ln f_0|dxdv+C_{f_0,T^f}\right\}\cr
&+\frac{1}{\ln R}\left\{\frac{-6\nu T_f}{(1-\nu)(1+2\nu)}\int_{\Omega_x\times \mathbb{R}^3_v}f_0dxdv\right\}1_{-1/2<\nu<1}.
\end{align*}
which, from the Dunford-Pettis theroem, gives the weak compactness of $\mathcal{M}_{\nu}(f)$ in $L^1((0,T)\times \Omega_x\times \mathbb{R}^3_v)$ once
$\{f\}$ is weak compact in $L^1((0,T)\times \Omega_x\times \mathbb{R}^3_v)$.
\begin{section}{Acknowledgement}
This research was supported by Basic Science Research Program through
the National Research Foundation of Korea (NRF) funded by the Ministry of Science, ICT $\&$
Future Planning (NRF-2014R1A1A1006432)
\end{section}


%
%
\bibliographystyle{amsplain}

\end{document}